\documentclass[english,letterpaper,11pt,reqno]{amsart}

\setlength{\parindent}{0pt}  
\usepackage{appendix}
\usepackage{amsbsy}
\usepackage{amsfonts}
\usepackage{amsmath}
\usepackage{amssymb}
\usepackage{amsthm}
\usepackage{graphicx}
\usepackage{ifthen}
\usepackage{textcomp}
\usepackage{soul}
\usepackage{enumitem,kantlipsum}
\usepackage{tikz}
\usepackage{mathrsfs}
\usepackage{cancel}
\usepackage{lmodern}
\usepackage{textcomp}

\usepackage{dsfont}
\usepackage[bookmarksnumbered,colorlinks]{hyperref}
\hypersetup{colorlinks=true, linkcolor=blue, citecolor=blue}
\emergencystretch20pt 


\newcommand{\RR}{\mathbb{R}}

\newtheorem{thm}{Theorem}

\newtheorem{cor}{Corollary}

\newtheorem{defn}{Definition}
\newtheorem{prop}{Proposition}
\newtheorem*{definition-non}{Definition}
\newtheorem*{theorem-non}{Theorem}
\newtheorem*{Proposition-non}{Proposition}
\newtheorem*{lemma-non}{Lemma}
\newtheorem*{corollary-non}{Corollary}

\newcommand{\beqa}{\begin{eqnarray}}
\newcommand{\beq}{\begin{equation}}
\newcommand{\eeqa}{\end{eqnarray}}
\newcommand{\eeq}{\end{equation}}


\newcommand\imp{\hspace{.2in}\Rightarrow\hspace{.2in}}

\newcommand\cd[2]{\nabla_{\!#1}{#2}}

\newcommand\gL{h}
\newcommand\gpw{h_{\scriptscriptstyle \Omega}}

\newcommand\comma{\hspace{.2in},\hspace{.2in}}
\newcommand\commas{\hspace{.1in},\hspace{.1in}}
\makeatletter
\newcommand*{\defeq}{\mathrel{\rlap{%
                     \raisebox{0.24ex}{$\m@th\cdot$}}%
                     \raisebox{-0.24ex}{$\m@th\cdot$}}%
                     =}
\makeatother

\makeatletter
\newcommand*\owedge{\mathpalette\@owedge\relax}
\newcommand*\@owedge[1]{%
  \mathbin{%
    \ooalign{%
      $#1\m@th\bigcirc$\cr
      \hidewidth$#1\m@th\wedge$\hidewidth\cr
    }%
  }%
}
\makeatother

\usepackage{enumitem, color, amssymb}

\begin{document}
\title[]{Almost K\"ahler metrics and pp-wave spacetimes}
\author[]{Amir Babak Aazami and Robert Ream}
\address{Clark University\hfill\break\indent
Worcester, MA 01610}
\email{aaazami@clarku.edu, rream@clarku.edu}

\maketitle
\begin{abstract}
We establish a one-to-one correspondence between a class of strictly almost K\"ahler metrics on the one hand, and Lorentzian pp-wave spacetimes on the other; the latter metrics are well known in general relativity, where they model radiation propagating at the speed of light.  Specifically, we construct families of complete almost K\"ahler metrics by deforming pp-waves via their propagation wave vector.  The almost K\"ahler metrics we obtain exist in all dimensions $2n  \geq 4$, and are defined on both $\RR^{2n}$ and $\mathbb{S}^1\times\mathbb{S}^1 \times M$, where $M$ is any closed almost K\"ahler manifold; they are not warped products, they include noncompact examples with constant negative scalar curvature, and all of them have the property that their fundamental 2-forms are also co-closed with respect to the Lorentzian pp-wave metric.  Finally, we further deepen this relationship between almost K\"ahler and Lorentzian geometry by utilizing Penrose's ``plane wave limit," by which every spacetime has, locally, a pp-wave metric as a limit: using Penrose's construction, we show that in all dimensions $2n \geq 4$, every Lorentzian metric admits, locally, an almost K\"ahler metric of this form as a limit.
\end{abstract}

\section{Introduction}
A Riemannian metric $g$ is \emph{almost K\"ahler} if there is an almost complex structure $J$ compatible with $g$ and if the corresponding 2-form $g(\cdot,J)$ is a symplectic form.  What makes this setting ``almost" K\"ahler is that $J$ itself need not be integrable, as it would be for any K\"ahler metric; i.e., $J$ need not give rise to an atlas of holomorphic coordinate charts.  By omitting complex geometry in this way, almost K\"ahler metrics thereby sit at the boundary between the symplectic and the K\"ahler categories; a very comprehensive survey of them can be found in \cite{apostolov}.  In this paper we shine a light on these metrics from a new direction, by showing that a class of them derive from\,---\,indeed, are in one-to-one correspondence with\,---\,a distinguished class of \emph{Lorentzian} metrics, namely the so called \emph{pp-wave} spacetimes modeling radiation propagating at the speed of light (see, e.g., \cite[Chapter~13]{beem}); among the many remarkable properties exhibited by pp-waves is the fact that many of them have vanishing curvature invariants, and yet are not flat \cite{coley}.   In saying that our almost K\"ahler metrics $g$ ``derive from" them, we mean that if $h$ is a pp-wave metric, then $g$ will be given by
$$
g \defeq h + 2T^{\flat}\otimes T^{\flat}
$$
for a suitable choice of vector field $T$ satisfying $h(T,T) = -1$ (in the parlance of Lorentzian geometry, a so called ``timelike" vector field; here $T^{\flat} = h(T,\cdot)$ is the one-form $h$-metrically equivalent to $T$).  In our setting, $T$ will arise from deforming the propagation wave vector of the radiation being modeled by the pp-wave metric, and we will say that $g$ is ``dual" to the pp-wave metric $h$.  In general, any choice of timelike $T$ will yield a Riemannian metric as above, but as we show in Section \ref{sec:complete}  below, our particular choice of $T$ will always yield, not only an almost K\"ahler metric, but a \emph{complete} one (our metrics are defined in both the compact and the noncompact settings, and therefore completeness must be explicitly verified in the latter).
\vskip 3pt
In the remainder of this Introduction, we outline how our almost K\"ahler metrics compare with those already in the literature.  To the best of our knowledge, the first noteworthy example of a strictly almost K\"ahler metric was exhibited by \cite{thurston}, a compact 4-manifold defined as a 2-torus bundle over a 2-torus; noncompact examples\,---\,other than the tangent bundles of certain Riemannian manifolds, which were known\,---\,came soon afterwards in \cite{watson}, which also contained compact examples by way of Thurston's.   Thurston's example was generalized to higher dimensions in \cite{cordero}. In dimension 4, where the Hodge star operator can be used, \cite{kim} obtained strictly almost K\"ahler metrics via deformations of scalar-flat K\"ahler metrics.  Another class of examples, due to B\'erard-Bergery, can be found on Einstein metrics $M$, with positive Einstein constant, that admit certain Riemannian submersions $P \to M$ with $P$ a principal $\mathbb{S}^1$-bundle; see \cite[Theorem 9.76,~p.~255]{besse}.  Finally, \cite{jelonek} showed the existence of strictly almost K\"ahler metrics on products $\mathbb{S}^1\times \mathbb{S}^1 \times M$ where $M$ is any almost K\"ahler metric, including those with constant negative scalar curvature on the $6$-torus; these metrics are different from our compact examples because those in \cite{jelonek} are warped products, whereas ours are not; furthermore, our construction also yields complete examples on $\RR^{2n}$.  Finally, conformally flat examples of the form $\RR^n \times M$, where $M$ is a (not necessarily complete) Riemannian manifold, were constructed in \cite{catalano}.  In comparison, the distinguishing feature of our examples is their relationship, not merely to pp-waves, but to Lorentzian geometry in general: we show that \emph{every} (even-dimensional) Lorentzian manifold admits, locally, an almost K\"ahler metric in an appropriate limit, namely, the ``plane wave limit" due to Penrose \cite{penrose}.
\vskip 3pt
This paper is organized as follows.  Section \ref{subsec:pp:waves} gives a brief overview of pp-wave spacetimes, emphasizing their geometric and geodesic properties.  Sections \ref{sec:complete} and \ref{sec:curv} introduce the candidates for our almost K\"ahler metrics on $\RR^{2n}$, showing that they are complete and computing their curvature; in Section \ref{sec:Kahler} we demonstrate that they are, in fact, strictly almost K\"ahler metrics (Theorem \ref{thm:almost}), and that they have the added property that their fundamental 2-form is co-closed with respect to the Lorentzian pp-wave metric as well (Corollary \ref{cor:co-closed}).  Section \ref{sec:compact} then generalizes this construction to the compact setting, on manifolds of the form $\mathbb{S}^1 \times \mathbb{S}^1 \times M$, where $M$ is any closed almost K\"ahler manifold (Theorem \ref{thm:torus}).
Finally, Section \ref{sec:Penrose} establishes a deeper connection between almost K\"ahler and Lorentzian geometry, by showing that every (even-dimensional) spacetime admits, locally, an almost K\"ahler metric via Penrose's plane wave limit mentioned above; this is codified in Theorem \ref{thm:3}; we also include here an introduction to Penrose's limit itself.
\section{Brief overview of pp-waves}
\label{subsec:pp:waves}
The class of pp-wave spacetimes have their origin in gravitational physics and have been intensely studied therein; see, e.g., \cite{AMS}, \cite{flores}, and \cite[Chapter 13]{beem}. The definition we give here is due to \cite{globke, leistner}; it is in fact the more modern, coordinate-independent version of the ``standard" definition appearing in the physics literature. Before stating it, recall that with respect to a Lorentzian metric $\gL$ (with index $-\!+\!+\cdots+$), nonzero vectors $X$ divide into three types:
\beqa
\text{$X$ is}~\left\{\begin{array}{ccc}
\text{``spacelike"} &\text{if}& \text{$\gL(X,X) > 0$},\\
\text{``timelike"} &\text{if}& \text{$\gL(X,X) < 0$},\\
\text{``lightlike"} &\text{if}& \text{$\gL(X,X) = 0$}.
\end{array}\right.\nonumber
\eeqa

\begin{defn}[\cite{globke}]
On a \emph{(}compact or noncompact\emph{)} manifold $M$, a Lorentzian metric $\gL$ is a \emph{pp-wave} if it admits a globally defined lightlike vector field $V$ that is parallel, $\nabla V = 0$ \emph{(}$\nabla$ is the Levi-Civita connection of $\gL$\emph{)}, and if its curvature endomorphism $R$ satisfies
\beqa
\label{eqn:ppR}
R(X,Y) = 0~~~\text{for all $X,Y \in \Gamma(V^{\perp})$}.
\eeqa
If in addition $\nabla_{\!X}R  = 0$ for all $X \in \Gamma(V^{\perp})$, then $(M,h)$ is a \emph{plane wave}.  
\end{defn}

Locally, such manifolds always take the following form, a special case of a class of coordinates known as ``Walker coordinates" \cite{walker}:

\begin{theorem-non}[\cite{leistner}]
On any pp-wave $(M,\gL)$, there exist local coordinates $(v,u,x^3,\dots,x^n)$ in which $V = \partial_v$ and
\beqa
\label{eqn:metric}
\gL = H(u,x^3,\dots,x^n)du^2 + 2dvdu + \sum_{i=3}^n (dx^i)^2
\eeqa
for some smooth function $H(u,x^3,\dots,x^n)$ independent of $v$.  Furthermore, $(M,h)$ will be a plane wave if and only if $H$ is a quadratic polynomial in $x^3,\dots,x^n$.
If \eqref{eqn:metric} exists globally on $\RR^n$, then $(\RR^n,h)$ is a \emph{standard pp-wave} or a \emph{standard plane wave}.  The universal cover of a compact pp-wave is globally isometric to a standard pp-wave. 
\end{theorem-non}

Although standard pp-waves on $\RR^n$ are the most common, \emph{compact} pp-waves exist also; e.g., on the $n$-torus $\mathbb{T}^n = \mathbb{S}^1 \times \cdots \times \mathbb{S}^1$, as shown in \cite{leistner}.  The coordinates \eqref{eqn:metric} make pp-waves look deceptively simple, nevertheless they are a remarkable class of Lorentzian metrics. Their defining feature is the parallel lightlike vector field $V$ which, in the local coordinates \eqref{eqn:metric}, is the gradient
$$
V  = \partial_v = \text{grad}\,u.
$$
This vector field models the wave vector of a gravitational or electromagnetic wave propagating at the speed of light.  Indeed, not only are plane waves solutions to the linearized Einstein equations, but, as shown in \cite{Mon}, any vacuum solution to the Einstein equations that possesses a non-homothetic conformal Killing vector field is either conformally flat or a pp-wave.  In fact the ``wave nature" of pp-waves manifests directly as follows: if for an arbitrary lightlike vector field $Z$ one imposes the geodesic condition
$
\cd{Z}{Z} = 0
$
locally in the coordinates \eqref{eqn:metric}, then in dimension 3 this reduces to an (inviscid) Burgers' PDE, which is well known to describe wave motion and shock waves (in higher dimensions it becomes a coupled system of Burger's PDEs).  Their wave nature aside, our interest in pp-waves is due in particular to their curvature and their geodesic properties:
\begin{enumerate}[leftmargin=.2in]
\item[1.] Curvature: For any choice of $H(u,x^3,\dots,x^n)$, the metric \eqref{eqn:metric} is always scalar-flat; in fact, something deeper is true: for certain choices of $H$, all curvature invariants of \eqref{eqn:metric} vanish identically (see \cite{coley}), and yet the metric is not flat in general (a distinctly Lorentzian phenomenon).  It is, however, almost Ricci-flat: in the coordinate basis $\{\partial_v,\partial_u,\partial_3,\dots,\partial_n\}$, the only nonzero component of the Ricci tensor is
$$
\text{Ric}(\partial_u,\partial_u) = -\frac{1}{2}\sum_{i=3}^n H_{ii},
$$
where $H_{ii} \defeq \frac{\partial^2H}{\partial x^i \partial x^i}$.
Thus a pp-wave is Ricci-flat if and only if $H$ is harmonic in $x^3,\dots,x^n$.
\item[2.] Geodesics: In local coordinates \eqref{eqn:metric}, the geodesic equations of motion are $n$ second-order ODEs in $v,u,x^3,\dots,x^n$\,---\,but in fact they reduce to just $n-2$ ODEs, namely 
\beqa
\label{eqn:Hode}
\ddot{x}^i = H_i(t,x^3(t),\dots,x^n(t)) \comma i =x^3,\dots,x^n,
\eeqa
where dots indicate derivatives taken with respect to the affine parameter $t$, which can in fact be scaled to equal the coordinate $u$. In other words, the geodesic equations of motion reduce entirely to a (generally time-dependent) Hamiltonian system\,---\,and the completeness of such systems is well understood; see, e.g., \cite{ebin,weinstein,flores,FS-Ehlers}.  Indeed, as we'll show, \eqref{eqn:Hode} is ultimately the reason why the (Riemannian) almost K\"ahler metrics we construct on $\RR^{2n}$ will be geodesically complete.
\end{enumerate}

\section{From pp-waves to complete Riemannian metrics}
\label{sec:complete}
It is precisely these two features of pp-waves that make them ideally suited to construct distinguished Riemannian metrics, a process which we now describe.  When a nowhere vanishing timelike vector field $T$ is present, any Lorentzian metric $h$ has a Riemannian ``dual" given by
\beqa
\label{eqn:LS0}
g \defeq \gL + 2T^{\flat} \otimes T^{\flat}.
\eeqa
(Here $T^{\flat} = h(T,\cdot)$ is the one-form $h$-metrically equivalent to $T$, and in fact $g(T,\cdot) = -h(T,\cdot)$; note that we are assuming for convenience here that $h(T,T) = -1$.)   The relationship \eqref{eqn:LS0} is well known and has been studied extensively; see, e.g., \cite{olea} for a recent analysis which includes, among other things, curvature formulae.  We start by making clear what will play the role of the vector field ``bridge," $T$, for us: 

\begin{defn}
Let $(M,h)$ be a compact or noncompact pp-wave.  For a given unit timelike vector field $T$ on $(M,h)$, the Riemannian metric
\beqa
\label{def:dual}
g \defeq \gL + 2T^{\flat} \otimes T^{\flat}
\eeqa
is said to be \emph{$T$-dual to} $h$.  On a standard pp-wave $(\RR^n,h)$ in the coordinates \eqref{eqn:metric}, define the unit timelike vector field
\beqa
\label{def:T}
T \defeq \frac{1}{2}\big(H+1\big)\partial_v -  \partial_u.
\eeqa
For this choice of $T$, the corresponding Riemannian metric \eqref{def:dual} will be called the \emph{standard Riemannian dual}.
\end{defn}

An important question is when this dual metric $g$ will be complete.  One general criterion for the completeness of the Riemannian metric $g$ in \eqref{eqn:LS0} is the following: if $g(T,\cdot)$ is bounded on $TM$, then $g$ will be complete if $h$ is (this is a consequence of \cite[Proposition 3.4]{candela}).  In our case, it turns out that a direct analysis of the geodesic equations of \eqref{def:dual}, without recourse to $h$, is better.  Indeed, it turns out the choice of  \eqref{def:T} will
\emph{always} yields a complete $g$\,---\,a testament to the simplicity of the Hamiltonian geodesic equations \eqref{eqn:Hode} of the original pp-wave: 

\begin{prop}
\label{prop:compn}
The standard Riemannian dual $g$ is complete for any choice of smooth $H(u,x^3,\dots,x^n)$. Furthermore, $\partial_v$ is a constant length Killing vector field with respect to $g$.
\end{prop}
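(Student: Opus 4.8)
The second assertion is immediate, so I would dispatch it first. Writing out the dual metric \eqref{def:dual} in the coordinates \eqref{eqn:metric} with the choice \eqref{def:T}, I would compute $T^{\flat}=-dv+\tfrac12(1-H)\,du$ and hence
$$
g = 2\,dv^2 + 2H\,dv\,du + \tfrac12(1+H^2)\,du^2 + \sum_{i=3}^{n}(dx^i)^2 .
$$
Every coefficient is independent of $v$ (since $H=H(u,x^3,\dots,x^n)$), so the flow of $\partial_v$ is by isometries and $\partial_v$ is Killing; moreover $g(\partial_v,\partial_v)=2$ is constant, giving the constant-length claim.

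For completeness the plan is to extract two first integrals along an arbitrary $g$-geodesic $\gamma(t)=(v,u,x^3,\dots,x^n)(t)$ and use them to bound the ``transverse'' velocities. First, geodesics have constant speed, so the energy $E\defeq\tfrac12 g(\dot\gamma,\dot\gamma)$ is conserved. Second, because $\partial_v$ is Killing, $p\defeq g(\partial_v,\dot\gamma)=2\dot v+H\dot u$ is conserved. The key algebraic step is to solve the latter for $\dot v=\tfrac12(p-H\dot u)$ and substitute into $2E=g(\dot\gamma,\dot\gamma)$: a short computation shows that every $H$-dependent term cancels, leaving the $H$-free relation
$$
\tfrac12\dot u^{2}+\sum_{i=3}^{n}(\dot x^i)^2 \;=\; 2E-\tfrac12 p^{2}\;=\;\text{const}.
$$
Consequently $|\dot u|$ and each $|\dot x^i|$ are bounded by a constant determined by the (conserved) initial data.

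With these bounds in hand the rest is a standard escape-lemma argument for the geodesic spray on $T\RR^n$. On any finite interval $[0,\tau)$ the bounds on $\dot u,\dot x^i$ force $u,x^3,\dots,x^n$ to grow at most linearly, so the trajectory remains in a fixed compact set $K\subset\RR^{n-1}$ in the $(u,x)$-variables; since $H$ is smooth it is bounded on $K$, whence $\dot v=\tfrac12(p-H\dot u)$ is bounded on $[0,\tau)$ and $v$ stays bounded. Thus $(\gamma,\dot\gamma)$ remains in a compact subset of $T\RR^n$ as $t\to\tau^-$, which is impossible for a maximal integral curve of the smooth spray unless the maximal interval is all of $\RR$; time-reversal handles $t\to-\infty$, and geodesic completeness follows.

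The crux of the argument—and the step I would check most carefully—is the cancellation in the energy relation: it is precisely the choice \eqref{def:T} that collapses the effective coefficient of $\dot u^2$ to $\tfrac12$, independent of $H$, so that energy and $\partial_v$-momentum together pin down $\dot u$ and $\dot x^i$. A secondary point worth flagging is that one cannot hope to bound $\dot v$ \emph{globally}, since $H$ may be unbounded; one only controls $\dot v$ on finite time intervals, which is all the escape lemma needs. This is the geodesic manifestation of the tameness already present in the reduced pp-wave system \eqref{eqn:Hode}.
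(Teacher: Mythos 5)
Your proposal is correct, and it arrives at exactly the same two first integrals that drive the paper's proof---the $\partial_v$-momentum $2\dot v+H\dot u$ and the transverse energy $\tfrac12\dot u^2+\sum_{i}(\dot x^i)^2$---but by a genuinely different derivation. The paper computes the Christoffel symbols of \eqref{eqn:gn}, writes out the geodesic system \eqref{eqn:geodn}, and then observes \emph{a posteriori} that the $\ddot v$ equation integrates against the $\ddot u$ equation (giving $2\dot v+H\dot u=c$) and that $2\sum_i\ddot x^i\dot x^i=-\ddot u\dot u$ integrates to \eqref{eqn:2}. You instead get the first integral for free from the Killing field $\partial_v$ (which you prove first, reversing the paper's order) and the second from constancy of $g(\dot\gamma,\dot\gamma)$ plus the algebraic cancellation after substituting $\dot v=\tfrac12(p-H\dot u)$---a cancellation I verified: the cross terms in $2\dot v^2+2H\dot v\dot u$ leave $\tfrac12 p^2-\tfrac12 H^2\dot u^2$, which the $\tfrac12 H^2\dot u^2$ in $g_{uu}\dot u^2$ exactly absorbs. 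This Noether-style route buys you a computation-free proof (no Christoffel symbols at all) and makes transparent \emph{why} the choice \eqref{def:T} works; it is also slightly more careful than the paper at the last step, since you bound $\dot v$ on finite intervals via compactness of the $(u,x)$-image and invoke the escape lemma explicitly, where the paper simply asserts that global existence of $u,x^i$ forces global existence of $v$. The only cost is that the paper's Christoffel symbols are reused later (in Proposition \ref{prop:Rm}) to compute curvature, so its computational detour is not wasted in the larger scheme of the paper.
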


\begin{proof}
Setting $i =x^3,\dots,x^n$, and using the coordinate components of $g$ in \eqref{eqn:gn} below, its non-vanishing Christoffel symbols are
\beqa
\Gamma^u_{vi} = -2\Gamma^i_{vu} = H_i \!\!\!\!\!&\comma&\!\!\!\!\!\Gamma^v_{vi} = \Gamma^i_{uu} = -\Gamma^u_{ui} = -\frac{1}{2}HH_i,\nonumber\\
\Gamma^v_{uu} = \frac{1}{2}H_u \!\!\!\!\!&\comma&\!\!\!\!\! \Gamma^v_{ui} = -\frac{1}{4}(H^2-1)H_i,\nonumber
\eeqa
with corresponding geodesic equations of motion
\beqa
\label{eqn:geodn}
\left.\begin{array}{ccc}
\ddot{v} \!\!&=&\!\! \frac{1}{2}(2\dot{v}+\dot{u}H)H \Big(\!\sum_{i=3}^n H_i \dot{x}^i\Big) - \frac{1}{2}\Big(\!\sum_{i=3}^n H_i \dot{x}^i\dot{u}\Big) - \frac{1}{2}H_u\,\dot{u}^2,\phantom{\Big(\Big)}\\
\ddot{u} \!\!&=&\!\! -(2\dot{v} + \dot{u} H) \Big(\!\sum_{i=3}^n H_i \dot{x}^i\Big),\phantom{\Big(\Big)}\\
\ddot{x}^i \!\!&=&\!\! \frac{1}{2} (2\dot{v} + \dot{u} H) H_i\,\dot{u}.\phantom{\Big(\Big)}\\
\end{array}\right\}
\eeqa
The first step is to recognize that $\ddot{v}$ is integrable by using the equation for $\ddot{u}$:
$$
\ddot{v}  = -\frac{1}{2}H\ddot{u} -\frac{1}{2}\bigg(\sum_{i=3}^n H_i \dot{x}^i\dot{u}\bigg)- \frac{1}{2}H_u\,\dot{u}^2 = -\frac{1}{2}\frac{d}{dt}\big(H\dot{u}\big). 
$$
This yields a constant of the motion,
$$
2\dot{v} + \dot{u} H = 2\dot{v}_0 + \dot{u}_0 H_0 \defeq c
$$
(with ``$0$" denoting the initial value), which in turn simplifies \eqref{eqn:geodn}:
\beqa
\left.\begin{array}{ccc}
\ddot{v} \!\!&=&\!\! \frac{c}{2}H \Big(\!\sum_{i=3}^n H_i \dot{x}^i\Big) - \frac{1}{2}\Big(\!\sum_{i=3}^n H_i \dot{x}^i\dot{u}\Big) - \frac{1}{2}H_u\,\dot{u}^2,\phantom{\Big(\Big)}\\
\ddot{u} \!\!&=&\!\! -c\sum_{i=3}^n H_i \dot{x}^i,\phantom{\Big(\Big)}\\
\ddot{x}^i \!\!&=&\!\! \frac{c}{2} H_i\,\dot{u}.\phantom{\Big(\Big)}\\
\end{array}\right\}\nonumber
\eeqa
If $H(u,x^3,\dots,x^n)$ is a given smooth function for which the solutions $x^3(t),\dots,x^n(t), u(t)$ exist for all $t \in \RR$, then so must $v(t)$.  Hence we show the former, by observing that $\ddot{x}^3,\dots,\ddot{x}^n,\ddot{u}$ combine to yield another constant of the motion; indeed, denoting the initial values by the subscript ``$0$" as above,
\beqa
\label{eqn:2}
2\sum_{i=3}^n\ddot{x}^i\dot{x}^i = -\ddot{u}\dot{u} \imp \sum_{i=3}^n(\dot{x}^i(t))^2 + \frac{1}{2}\dot{u}(t)^2 = \underbrace{\sum_{i=3}^n(\dot{x}_0^i(t))^2 + \frac{1}{2}\dot{u}_0^2}_{c_2}.
\eeqa
In particular, each $|\dot{x}^i(t)| \leq \sqrt{c_2}$, in which case,
$$
|x^i(t)|\,-\,|x_0^i| \leq \bigg|\int_{t_0}^t \dot{x}^i(s)\,ds\bigg| \leq \int_{t_0}^t |\dot{x}^i(s)|\,ds \leq \sqrt{c_2}\int_{t_0}^t ds = \sqrt{c_2}(t-t_0),
$$
so that $x^i(t)$ must be bounded over any compact interval $[t_0,t]$; likewise with $u(t)$. In particular, their maximal solutions must be global. Finally, that  $\partial_v$ is a constant length Killing vector field follows because, with respect to the (global) coordinate basis $\{\partial_v,\partial_u,\partial_3,\dots,\partial_n\}$, the $g_{ij}$'s are independent of $v$:
\beqa
\label{eqn:gn}
(g_{ij}) = \begin{pmatrix}2 & H& 0 & 0 &\cdots &0\\ H & \frac{1}{2}(1+H^2)& 0& 0 &\cdots & 0 \\ 0 & 0 &1 &0 & \cdots & 0\\ 0 &0&0&1&\cdots &0\\\vdots & \vdots & \vdots & \vdots & \ddots & \vdots\\0 &0&0&0&\cdots & 1\end{pmatrix}\cdot
\eeqa
This completes the proof.
\end{proof}

\section{The curvature of $g$}
\label{sec:curv}

Note that \eqref{eqn:gn} is not a warped product, though it is an example of so called \emph{semigeodesic coordinates} in Riemannian geometry; see, e.g., \cite{Lee}.  Note also that $\partial_v$ is not parallel with respect to $g$, as it was with respect to $h$.
So much for the geodesic completeness of these metrics; we now move on to computing their curvature; to do so, it is best to work, not in the coordinate basis $\{\partial_v,\partial_u,\partial_{x^3},\dots,\partial_{x^n}\}$, but rather with respect to the following globally defined $g$-orthonormal frame:

\begin{prop}
\label{prop:Rm}
With respect to the orthonormal frame $\{T,X_3,\dots,X_{n},Z\}$ defined by
\beqa
\label{eqn:Tn}
T \defeq \frac{1}{2}(H+1)\partial_v-\partial_u \commas X_i \defeq \partial_i \commas Z \defeq \frac{1}{2}(H-1)\partial_v-\partial_u,
\eeqa
any standard Riemannian dual $g$ in Proposition \ref{prop:compn} has Ricci tensor
\beqa
\label{eqn:Ric0}
\emph{Ric}_g = \frac{1}{2}\begin{pmatrix}\!\!\!\!\!\!\!\phantom{\Big(\Big)}\sum_{i=3}^{n}H_{ii}&H_{u3} & H_{u4} &\cdots & H_{un}&-\!\sum_{i=3}^{n}H_{i}^2\\\phantom{\Big(\Big)}&-H_3^2&-H_{3}H_4 & \cdots&-H_3H_{n}&-H_{3u}\\ \phantom{\Big(\Big)}&&-H_{4}^2 & \cdots&-H_4H_n&-H_{4u}\\ \phantom{\Big(\Big)}&&&\ddots&\vdots&\vdots\\ \phantom{\Big(\Big)}&&&&-H_n^2&-H_{nu}\\\phantom{\Big(\Big)}& & && &-\!\sum_{i=3}^{n}H_{ii}\end{pmatrix}\cdot
\eeqa
\end{prop}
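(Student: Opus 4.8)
The plan is to compute the curvature by the method of moving frames, working directly in the $g$-orthonormal frame $\{T,X_3,\dots,X_n,Z\}$ of \eqref{eqn:Tn}; I favor this over a coordinate computation because the associated coframe is simple and, as I will note, almost all of the connection $1$-forms vanish. First I would write down the dual coframe. Since the frame is $g$-orthonormal the dual $1$-forms are just the metric duals $g(T,\cdot),g(X_i,\cdot),g(Z,\cdot)$; using the components \eqref{eqn:gn} (equivalently the identity $g(T,\cdot)=-h(T,\cdot)$ noted in the text) a short computation gives
\[
\theta^{T}=dv+\tfrac12(H-1)\,du,\qquad \theta^{i}=dx^{i},\qquad \theta^{Z}=-dv-\tfrac12(H+1)\,du,
\]
and one verifies $g=\theta^{T}\otimes\theta^{T}+\sum_i\theta^{i}\otimes\theta^{i}+\theta^{Z}\otimes\theta^{Z}$, confirming orthonormality. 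The key identity to record here is $\theta^{T}+\theta^{Z}=-du$, so that $du$, and hence $dH=H_u\,du+\sum_iH_i\,dx^i$, is expressible through the coframe.

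Next I would take exterior derivatives. Since $d\theta^{i}=0$ and $d(du)=0$, only $\theta^{T},\theta^{Z}$ are nonclosed, and the identity above gives $d\theta^{T}=-d\theta^{Z}=-\tfrac12\sum_iH_i\,\theta^{i}\wedge(\theta^{T}+\theta^{Z})$. I would then solve the first structure equations $d\theta^{a}+\omega^{a}{}_{b}\wedge\theta^{b}=0$ together with the skew-symmetry $\omega^{a}{}_{b}=-\omega^{b}{}_{a}$ of a Levi-Civita connection in an orthonormal frame. The decisive simplification, and the reason the frame is the right tool, is that the solution is very sparse: the spatial block vanishes, $\omega^{i}{}_{j}=0$, and up to skew-symmetry the only nonzero connection forms are $\omega^{T}{}_{i}=-\tfrac12H_i\,\theta^{T}$, $\omega^{Z}{}_{i}=\tfrac12H_i\,\theta^{Z}$, and $\omega^{T}{}_{Z}=\tfrac12\sum_iH_i\,\theta^{i}$.

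I would then substitute into the second structure equations $\Omega^{a}{}_{b}=d\omega^{a}{}_{b}+\omega^{a}{}_{c}\wedge\omega^{c}{}_{b}$, read the Riemann components off $\Omega^{a}{}_{b}=\tfrac12R^{a}{}_{bcd}\,\theta^{c}\wedge\theta^{d}$, and contract via $\mathrm{Ric}_{bd}=\sum_aR^{a}{}_{bad}$ to obtain \eqref{eqn:Ric0}. The main obstacle is the organizational bookkeeping at this last stage rather than any genuine difficulty: the exterior derivatives $d\omega^{a}{}_{b}$ feed in the second derivatives $H_{ij},H_{iu}$, which are exactly the linear entries of \eqref{eqn:Ric0}, while the quadratic terms $\omega^{a}{}_{c}\wedge\omega^{c}{}_{b}$ feed in products $H_iH_j$, and one must keep careful track of how these combine. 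The pattern to watch is the interplay of the $H_i^2$ terms: I expect, for example, $R^{i}{}_{TiT}=\tfrac12H_{ii}-\tfrac14H_i^{2}$ and $R^{Z}{}_{TZT}=\tfrac14\sum_iH_i^{2}$, so that on summation the quadratic pieces cancel and $\mathrm{Ric}(T,T)=\sum_iR^{i}{}_{TiT}+R^{Z}{}_{TZT}=\tfrac12\sum_iH_{ii}$; the analogous cancellation in the $(Z,Z)$ slot leaves $-\tfrac12\sum_iH_{ii}$ (note the opposite sign, consistent with \eqref{eqn:Ric0}), while the uncancelled off-diagonal and corner terms produce the entries $\tfrac12H_{ui}$, $-\tfrac12\sum_iH_i^2$, and $-\tfrac12H_iH_j$. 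As consistency checks I would confirm the symmetry of the resulting matrix and that its trace gives the scalar curvature $-\tfrac12\sum_iH_i^2$, a clean expression which also dovetails with the negative-scalar-curvature examples promised in the introduction. A coordinate computation starting from the Christoffel symbols already listed in Proposition \ref{prop:compn} is of course also available, but I expect it to be heavier, since the coordinate connection is far less sparse than the frame connection found above.
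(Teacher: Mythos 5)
Your proposal is correct, and it reaches \eqref{eqn:Ric0} by a different computational formalism than the paper, though both work in the same frame \eqref{eqn:Tn}. The paper's proof stays entirely with covariant derivatives: it imports the coordinate Christoffel symbols already computed in the proof of Proposition \ref{prop:compn}, converts them into the covariant-derivative table of the frame (e.g.\ $\nabla_T T = -\nabla_Z Z = \sum_i \tfrac{H_i}{2}X_i$, $\nabla_T X_i = -\tfrac{H_i}{2}T$, $\nabla_Z X_i = \tfrac{H_i}{2}Z$, with $\nabla_T Z = \nabla_Z T = \nabla_{X_i}X_j = 0$), and then evaluates each component of $\mathrm{Rm}_g$ directly from the curvature operator $\nabla_X\nabla_Y - \nabla_Y\nabla_X - \nabla_{[X,Y]}$ before tracing. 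You instead derive the connection intrinsically from Cartan's first structure equation plus skew-symmetry, never touching the coordinate Christoffel symbols, and get curvature from the second structure equation. The two encode identical data: your coframe $\theta^T = dv + \tfrac12(H-1)du$, $\theta^Z = -dv - \tfrac12(H+1)du$ is correct; your connection forms $\omega^T{}_i = -\tfrac12 H_i\,\theta^T$, $\omega^Z{}_i = \tfrac12 H_i\,\theta^Z$, $\omega^T{}_Z = \tfrac12\sum_i H_i\,\theta^i$, $\omega^i{}_j = 0$ are exactly the paper's covariant-derivative table transcribed (e.g.\ $\omega^T{}_i(T) = -\tfrac{H_i}{2}$ matches $\nabla_T X_i = -\tfrac{H_i}{2}T$); and your sample components $R^i{}_{TiT} = \tfrac12 H_{ii} - \tfrac14 H_i^2$ and $R^Z{}_{TZT} = \tfrac14\sum_i H_i^2$ agree with the paper's $\mathrm{Rm}_g(T,X_i,X_i,T)$ and $\mathrm{Rm}_g(Z,T,T,Z)$, including the cancellation of the quadratic terms in the $(T,T)$ and $(Z,Z)$ slots and the survival of the $H_iH_j$, $H_{iu}$, and $\sum_i H_i^2$ entries. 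What your route buys: it is self-contained within the frame, the sparsity is manifest from the single identity $\theta^T + \theta^Z = -du$, and the split between linear terms (from $d\omega^a{}_b$) and quadratic terms (from $\omega^a{}_c\wedge\omega^c{}_b$) organizes the bookkeeping cleanly. What the paper's route buys: it recycles Christoffel symbols needed anyway for the completeness argument, and avoids exterior-calculus machinery. One presentational point, not a mathematical gap: you state the connection forms as ``the solution'' without derivation; you should note that, by uniqueness of the Levi-Civita connection, it suffices to verify that your ansatz satisfies the first structure equations and skew-symmetry, which it does.
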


\begin{proof}
Using the $g$-orthonormal frame \eqref{eqn:Tn}, we compute the diagonal components of the Ricci tensor.  Using the Christoffel symbols computed in Proposition \ref{prop:compn}, the covariant derivatives are
\beqa
\cd{T}{T} = -\cd{Z}{Z} = \sum_{i=3}^{n}\frac{H_i}{2}X_i \!\!\!\!\!&\comma&\!\!\!\!\! \cd{T}{Z} = \cd{Z}{T} = \cd{X_i}{X_j} = 0,\nonumber\\
\cd{T}{X_i} = -\cd{X_i}{Z} = -\frac{H_i}{2}T \!\!\!\!\!&\comma&\!\!\!\!\! \cd{Z}{X_i} = -\cd{X_i}{T}  = \frac{H_i}{2}Z.\nonumber
\eeqa
The components of the Riemann 4-tensor $\text{Rm}_g$ are now easily computed:
\beqa
\left\{\begin{array}{lcl}
\text{Rm}_g(Z,T,T,Z) \!\!&=&\!\! g(\cd{Z}{\cd{T}{T}}-\cancel{\cd{T}{\cd{Z}{T}}}-\cancel{\cd{[Z,T]}{T}},Z) = \sum_{i=3}^{2n}\Big(\!\frac{H_{i}}{2}\!\Big)^{\!2},\phantom{\Big(\Big)}\nonumber\\
\text{Rm}_g(Z,T,X_i,Z) \!\!&=&\!\! g(\cancel{\cd{Z}{\cd{T}{X_i}}}-\cd{T}{\cd{Z}{X_i}}-\cancel{\cd{[Z,T]}{X_i}},Z) = \frac{H_{iu}}{2},\phantom{\Big(\Big)}\nonumber\\
\text{Rm}_g(T,Z,X_i,T) \!\!&=&\!\! g(\cancel{\cd{T}{\cd{Z}{X_i}}}-\cd{Z}{\cd{T}{X_i}}-\cancel{\cd{[T,Z]}{X_i}},T) = -\frac{H_{iu}}{2},\phantom{\Big(\Big)}\nonumber\\
\text{Rm}_g(X_i,T,Z,X_j) \!\!&=&\!\! g(\cancel{\cd{X_i}{\cd{T}{Z}}}-\cd{T}{\cd{X_i}{Z}}-\cd{[X_i,T]}{Z},X_j) = -\frac{H_{i}H_j}{2},\phantom{\Big(\Big)}\nonumber\\
\text{Rm}_g(T,X_i,X_j,T) \!\!&=&\!\! g(\cancel{\cd{T}{\cd{X_i}{X_j}}}-\cd{X_i}{\cd{T}{X_j}}-\cd{[T,X_i]}{X_j},T) = \frac{H_{ij}}{2}-\frac{H_iH_j}{4},\phantom{\Big(\Big)}\nonumber\\
\text{Rm}_g(Z,X_i,X_j,Z) \!\!&=&\!\! g(\cancel{\cd{Z}{\cd{X_i}{X_j}}}-\cd{X_i}{\cd{Z}{X_j}}-\cd{[Z,X_i]}{X_j},Z)  = -\frac{H_{ij}}{2} -\frac{H_iH_j}{4},\phantom{\Big(\Big)}\nonumber\\
\text{Rm}_g(X_j,T,X_i,X_j) \!\!&=&\!\! g(\cancel{\cd{X_j}{\cd{T}{X_i}}}-\cancel{\cd{T}{\cd{X_j}{X_i}}}-\cancel{\cd{[X_j,T]}{X_i}},X_j) = 0,\phantom{\Big(\Big)}\nonumber\\
\text{Rm}_g(X_j,Z,X_i,X_j) \!\!&=&\!\! g(\cancel{\cd{X_j}{\cd{Z}{X_i}}}-\cancel{\cd{Z}{\cd{X_j}{X_i}}}-\cancel{\cd{[X_j,Z]}{X_i}},X_j) = 0,\phantom{\Big(\Big)}\nonumber\\
\text{Rm}_g(T,Z,X_i,X_j) \!\!&=&\!\! g(\cancel{\cd{T}{\cd{Z}{X_i}}}-\cancel{\cd{Z}{\cd{T}{X_i}}}-\cancel{\cd{[T,Z]}{X_i}},X_j) = 0,\phantom{\Big(\Big)}\nonumber\\
\text{Rm}_g(X_k,X_i,X_j,X_k) \!\!&=&\!\! g(\cancel{\cd{X_k}{\cd{X_i}{X_j}}}-\cancel{\cd{X_i}{\cd{X_k}{X_j}}}-\cancel{\cd{[X_k,X_i]}{X_j}},X_k) = 0,\phantom{\Big(\Big)}\nonumber\\
\text{Rm}_g(T,X_i,X_j,X_k) \!\!&=&\!\! g(\cancel{\cd{T}{\cd{X_i}{X_j}}}-\cancel{\cd{X_i}{\cd{T}{X_j}}}-\cancel{\cd{[T,X_i]}{X_j}},X_k) = 0,\phantom{\Big(\Big)}\nonumber\\
\text{Rm}_g(Z,X_i,X_j,X_k) \!\!&=&\!\! g(\cancel{\cd{Z}{\cd{X_i}{X_j}}}-\cancel{\cd{X_i}{\cd{Z}{X_j}}}-\cancel{\cd{[Z,X_i]}{X_j}},X_k) = 0.\phantom{\Big(\Big)}\nonumber
\end{array}\right.
\eeqa
From this data, the Ricci tensor \eqref{eqn:Ric0} easily follows.
\end{proof}

These metrics are strongly controlled by their scalar curvature, somewhat reminiscent of the behavior of anti-self-dual 4-manifolds:

\begin{cor}
\label{cor:flat}
Let $(\RR^n,g)$ be the standard Riemannian dual to a pp-wave metric. Then the scalar curvature $\emph{scal}_g$ is nonpositive and vanishes if and only if $g$ is flat.
\end{cor}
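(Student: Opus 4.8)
The plan is to read the scalar curvature straight off the Ricci tensor \eqref{eqn:Ric0}. Since $\{T,X_3,\dots,X_n,Z\}$ is a $g$-orthonormal frame and $g$ is Riemannian, $\text{scal}_g$ is just the sum of the diagonal entries of \eqref{eqn:Ric0}. The key observation is that the two extreme diagonal terms cancel: the $(T,T)$ entry $\tfrac{1}{2}\sum_{i=3}^n H_{ii}$ is exactly minus the $(Z,Z)$ entry $-\tfrac{1}{2}\sum_{i=3}^n H_{ii}$, leaving only the $(X_i,X_i)$ contributions. This gives
\[
\text{scal}_g \;=\; -\frac{1}{2}\sum_{i=3}^n H_i^2 \;\leq\; 0,
\]
so nonpositivity is immediate.

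For the equivalence with flatness, I would first note from the displayed formula that $\text{scal}_g = 0$ precisely when $H_i = 0$ for every $i = 3,\dots,n$, i.e. when $H = H(u)$ is a function of $u$ alone. The reverse direction is trivial, as any flat metric is scalar-flat. For the forward direction I would return to the components of $\text{Rm}_g$ tabulated in the proof of Proposition \ref{prop:Rm} and observe that every nonzero entry there is assembled from the quantities $H_i$, $H_iH_j$, $H_{ij}$, and $H_{iu}$. Once all the first spatial derivatives $H_i$ vanish, differentiating yields $H_{ij} = \partial_j H_i = 0$ and, by equality of mixed partials, $H_{iu} = \partial_u H_i = 0$ as well, so every curvature component vanishes and $g$ is flat.

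There is no genuine obstacle here; the whole argument rides on the cancellation that reduces the trace of \eqref{eqn:Ric0} to $-\tfrac{1}{2}\sum H_i^2$, and on the fact that the full Riemann tensor of $g$ is controlled entirely by the first derivatives $H_i$. The only point requiring a moment of care is confirming that scalar-flatness forces not merely Ricci-flatness but the vanishing of the \emph{full} curvature tensor\,---\,this is precisely what the tabulated $\text{Rm}_g$ components deliver, since their simultaneous vanishing is equivalent to the condition $H_i \equiv 0$ already produced by $\text{scal}_g = 0$.
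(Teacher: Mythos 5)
Your proof is correct and follows essentially the same route as the paper: both take the trace of the Ricci matrix \eqref{eqn:Ric0} in the orthonormal frame \eqref{eqn:Tn}, use the cancellation of the $(T,T)$ and $(Z,Z)$ entries to get $\mathrm{scal}_g = -\tfrac{1}{2}\sum_{i=3}^n H_i^2$, and then invoke the Riemann components tabulated in Proposition \ref{prop:Rm} to conclude that $H_i \equiv 0$ forces $\mathrm{Rm}_g = 0$. Your only addition is to spell out explicitly that $H_i \equiv 0$ kills $H_{ij}$ and $H_{iu}$ by differentiation, a step the paper leaves implicit.
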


\begin{proof}
Since the frame \eqref{eqn:Tn} is orthonormal, the scalar curvature is the trace of the matrix \eqref{eqn:Ric0}:
\beqa
\label{eqn:nscalar}
\text{scal}_g = -\frac{1}{2}\sum_{i=3}^n H_{i}^2,
\eeqa
which vanishes if and only if each $H_i \defeq \frac{\partial H}{\partial x^i} = 0$.  As Proposition \ref{prop:Rm} shows, this is the case if and only if $\text{Rm}_g =  0$.
\end{proof}
The property ``$\text{scal}_g \leq 0$ and flat if $\text{scal}_g = 0$" in Corollary \ref{cor:flat} bears comparison to the following two facts from almost K\"ahler geometry: i) anti-self-dual almost K\"ahler metrics on oriented 4-manifolds must satisfy $\text{scal}_g \leq 0$, and are K\"ahler if and only if $\text{scal}_g = 0$; ii) in any even dimension, this is also true for any conformally flat almost K\"ahler metric (see \cite[Proposition~1]{apostolov}).  As Theorem \ref{thm:almost} below shows, our almost K\"ahler metrics have the same property.

\section{Strictly almost K\"ahler metrics on $\RR^{2n}$}
\label{sec:Kahler}
First, recall that an \emph{almost complex structure} $J$ is a smooth endomorphism of the tangent bundle satisfying $J^2 = -1$; it is \emph{compatible} with a Riemannian metric $g$ if $g(J,J) = g$.  Given such a pair, the 2-form $\omega \defeq g(\cdot,J)$ is called the \emph{fundamental 2-form}.

\begin{defn}
A $2n$-dimensional Riemannian manifold $(M,g)$ is an \emph{almost K\"ahler manifold} if there is an almost complex structure $J$ compatible with $g$ and such that the fundamental 2-form $\omega \defeq g(\cdot,J)$ is closed.
\end{defn}

Note that if $J$ were in addition \emph{integrable}, meaning that its \emph{Nijenhuis tensor}
$$
N_{\scriptscriptstyle J}(a,b) \defeq [Ja,Jb] -  J[Ja,b] - J[a,Jb] - [a,b]
$$
vanished identically, then $(g,J)$ would be a \emph{K\"ahler manifold}.  (If $\omega$ is not necessarily closed, then $(g,J)$ is an \emph{almost Hermitian manifold}.) In any case, we are now going to use Proposition \ref{prop:compn} to construct explicit examples of complete almost K\"ahler metrics in all even dimensions $\geq 4$. Our almost complex structure $J$ will be defined with respect to the globally defined $g$-orthonormal basis $\{T,X_3,\dots,X_{2n},Z\}$ defined in \eqref{eqn:Tn} above:
\beqa
\label{eqn:Jn}
JT \defeq Z \comma JX_3 \defeq X_4,\comma\dots\comma JX_{2n-1} \defeq X_{2n}.
\eeqa
This choice of $J$ yields many complete almost K\"ahler metrics, in both the compact and the noncompact setting; we begin with the latter:

\begin{thm}
\label{thm:almost}
Let $(\RR^{2n},g)$ be the standard Riemannian dual \eqref{eqn:gn} to a pp-wave metric, with $n\geq 2$.  Let $J$ be the almost complex structure defined via \eqref{eqn:Jn}. Then $(\RR^{2n},g,J)$ is a complete almost K\"ahler manifold with nonpositive scalar curvature, which is K\"ahler \emph{(}\hspace{-.01in}in fact, flat\emph{)} if and only if $g$ is scalar flat.
\end{thm}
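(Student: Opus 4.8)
The plan is to verify the three assertions—completeness, the almost K\"ahler condition $d\omega=0$, and the Kähler-iff-scalar-flat dichotomy—by reusing the covariant-derivative table established in the proof of Proposition \ref{prop:Rm}, so that almost no new computation of Christoffel symbols is needed. Completeness is immediate from Proposition \ref{prop:compn}. That $J$ is an almost complex structure compatible with $g$ is equally quick: extending \eqref{eqn:Jn} by $J^2=-1$ (so that $JZ=-T$ and $JX_{2k}=-X_{2k-1}$), the endomorphism $J$ carries the global $g$-orthonormal frame $\{T,X_3,\dots,X_{2n},Z\}$ of \eqref{eqn:Tn} to itself up to sign and reordering, hence is a pointwise isometry and satisfies $g(J\cdot,J\cdot)=g$. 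The fundamental 2-form is then the global form
$$
\omega = -\theta^T\wedge\theta^Z - \sum_{k=2}^{n}\theta^{2k-1}\wedge\theta^{2k},
$$
where $\{\theta^T,\theta^3,\dots,\theta^{2n},\theta^Z\}$ is the coframe dual to \eqref{eqn:Tn}.

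For the almost K\"ahler condition I would compute $d\omega$ via Cartan's first structure equation $d\theta^a=-\,\omega^{a}{}_{b}\wedge\theta^b$, reading the connection 1-forms $\omega^{a}{}_{b}$ directly off the covariant derivatives listed in Proposition \ref{prop:Rm}. The decisive intermediate fact is that every $\theta^i$ ($i=3,\dots,2n$) is closed, $d\theta^i=0$, whereas $d\theta^T$ and $d\theta^Z$ consist only of terms of the shape $\tfrac12 H_i\,\theta^T\wedge\theta^i$ and $\tfrac12 H_j\,\theta^j\wedge\theta^Z$. Because the $\theta^i$ are closed, the $\theta^{2k-1}\wedge\theta^{2k}$ summands of $\omega$ contribute nothing, so $d\omega = -\,d\theta^T\wedge\theta^Z + \theta^T\wedge d\theta^Z$; substituting the two expressions above, the surviving $H_i$-terms enter with opposite signs and cancel, yielding $d\omega=0$ for every smooth $H$.

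For the final equivalence I would invoke the standard fact that an almost K\"ahler manifold is K\"ahler exactly when $\nabla J=0$ (for almost K\"ahler structures, $\nabla J=0$ is equivalent to the integrability $N_{\scriptscriptstyle J}=0$, since $d\omega=0$ already holds). Using the covariant derivatives from Proposition \ref{prop:Rm} one computes the single component $(\cd{T}{J})T = \cd{T}{(JT)} - J\,\cd{T}{T} = -\tfrac12\sum_{i=3}^{2n}H_i\,JX_i$, and since the $JX_i=\pm X_j$ are linearly independent, this vanishes if and only if every $H_i=0$. Thus $\nabla J\neq 0$ whenever some $H_i\neq 0$, so $(g,J)$ then fails to be K\"ahler; conversely, if $g$ is scalar flat then Corollary \ref{cor:flat} makes it flat, the frame \eqref{eqn:Tn} parallel, and $J$—being constant in a parallel frame—satisfies $\nabla J=0$ and is K\"ahler. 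Since the scalar-curvature formula \eqref{eqn:nscalar} identifies ``all $H_i=0$'' with ``$g$ scalar flat,'' this completes the stated equivalence.

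I expect the main obstacle to be the sign bookkeeping in the $d\omega=0$ step: organizing the structure equations so that the $H_i$-dependence is transparent and confirming that the two families of terms genuinely cancel rather than merely appearing to. The conceptual crux that tames this is the observation $d\theta^i=0$, which collapses $d\omega$ to the two-term expression above; once that is in hand the cancellation is a short check, and the K\"ahler criterion reduces to the single, easily-read-off quantity $(\cd{T}{J})T$.
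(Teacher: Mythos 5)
Your proposal is correct, and although it follows the same overall architecture as the paper (completeness from Proposition \ref{prop:compn}, compatibility from orthonormality of the frame \eqref{eqn:Tn}, then two computations), both of your central verifications proceed by genuinely different means. For $d\omega=0$, the paper expands $d\omega(a,b,c)$ on frame triples using the Lie brackets \eqref{eqn:LB00} and checks the four types of components case by case; you instead work with the dual coframe, where $d\theta^i=0$ and $d\theta^T=-d\theta^Z=\tfrac12\sum_i H_i\,(\theta^T+\theta^Z)\wedge\theta^i$ collapse everything to the two-term expression $-d\theta^T\wedge\theta^Z+\theta^T\wedge d\theta^Z$; wedging kills the $\theta^Z$-part of $d\theta^T$ and the $\theta^T$-part of $d\theta^Z$, and the survivors indeed cancel, so your route trades the paper's case analysis for a single identity. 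For the K\"ahler dichotomy, the paper computes the Nijenhuis tensor $N_{\scriptscriptstyle J}(T,X_i)$ directly from the brackets in \eqref{eqn:Kahler00}, which is self-contained since the paper defines K\"ahler as almost K\"ahler with $N_{\scriptscriptstyle J}=0$; you instead invoke the standard fact that, once $d\omega=0$ holds, integrability of $J$ is equivalent to $\nabla J=0$, and then read off the single component $(\cd{T}{J})T=-\tfrac12\sum_i H_i\,JX_i$ from the covariant-derivative table in Proposition \ref{prop:Rm}. This costs you an external citation (the identity relating $\nabla J$, $d\omega$ and $N_{\scriptscriptstyle J}$ on almost Hermitian manifolds) but buys a one-line forward implication, and your converse is clean: scalar flat forces all $H_i\equiv 0$ by \eqref{eqn:nscalar}, which makes every entry of that table vanish, so the frame is parallel and $J$, having constant coefficients in a parallel frame, satisfies $\nabla J=0$. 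One small caution: your description of $d\theta^T$ and $d\theta^Z$ as consisting of terms ``of the shape $\tfrac12 H_i\,\theta^T\wedge\theta^i$ and $\tfrac12 H_j\,\theta^j\wedge\theta^Z$'' is accurate only up to signs (each of the two forms contains both shapes, with a relative sign), and those signs are exactly what make the final cancellation work rather than double; as you anticipated, this bookkeeping is the one step that must be written out in full.
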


\begin{proof}
That $J$ is compatible with $g$ follows by definition of $J$ in \eqref{eqn:Jn}, together with the fact that $\{T,X_3,\dots,X_{2n},Z\}$ is a $g$-orthonormal basis.  To check that the fundamental 2-form $\omega = g(\cdot,J)$ is closed, we consider the dual basis $\{\tau,\theta^3,\dots,\theta^{2n},\zeta\}$, where
$$
\tau \defeq \frac{1}{2}(H-1)du+dv \comma \theta^i \defeq  dx^i \comma \zeta \defeq -\frac{1}{2}(H+1)du-dv.
$$
With respect to this dual basis,
$$
\omega = \zeta\wedge\tau + \theta^4\wedge\theta^3+\dots +\theta^{2n}\wedge\theta^{2n-1},
$$
which is the standard symplectic form on $\RR^{2n}$.
Thus $\omega$ is closed and $(g,J)$ is an almost K\"ahler metric.  Finally, we determine when $g$ can be K\"ahler; i.e., when $J$ will be integrable. Given $JX_i = X_{i+1}$,
\beqa
N_{\scriptscriptstyle J}(T,X_i) \!\!&=&\!\! \underbrace{[JT,JX_i]}_{-\frac{H_{i+1}}{2}(T-Z)} -  \underbrace{J[JT,X_i]}_{-\frac{H_{i}}{2}J(T-Z)} - \!\!\underbrace{J[T,JX_{i}]}_{-\frac{H_{i+1}}{2}J(T-Z)}\! -\! \underbrace{[T,X_i]}_{-\frac{H_i}{2}(T-Z)}\nonumber\\
&=&\!\!  \frac{1}{2}(H_i-H_{i+1})(T-Z) + \frac{1}{2}(H_i+H_{i+1})J(T-Z)\nonumber\\
&=&\!\! 0 \iff H_i=H_{i+1} = 0.\label{eqn:Kahler00}
\eeqa
But as shown in Corollary \ref{cor:flat}, each $H_i = 0$ if and only if $g$ is scalar flat, which is the case if and only if $g$ is flat.
\end{proof}


\begin{cor}
\label{cor:Rm}
Among the strictly almost K\"ahler metrics on $\RR^{2n}$ in Theorem \ref{thm:almost}, there are left-invariant metrics \emph{(}for the additive Lie group structure on $\RR^{2n}$\emph{)}, namely, for any
$$
H(u,x^3,\dots,x^{2n}) \defeq \varphi(u) + \sum_{i=3}^{2n}a_ix^i,
$$
where $\varphi$ is any smooth function on $\RR$ and where each $a_i \in \RR$.
\end{cor}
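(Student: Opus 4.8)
The plan is to realize $g$, for these affine profile functions $H$, as a \emph{left-invariant} metric on a Lie group diffeomorphic to $\RR^{2n}$; the mechanism is that the $g$-orthonormal frame \eqref{eqn:Tn} acquires \emph{constant} structure constants exactly when $H$ is affine in $x^3,\dots,x^{2n}$. First I would record that for $H = \varphi(u) + \sum_{i=3}^{2n} a_i x^i$ each first derivative $H_i = a_i$ is constant (while $H_u = \varphi'(u)$, and all second derivatives $H_{ij}, H_{iu}$ vanish). By Theorem \ref{thm:almost} the resulting $g$ is strictly almost K\"ahler precisely when some $a_i \neq 0$, so these metrics genuinely sit inside the family constructed there.

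Next I would feed $H_i = a_i$ into the Lie brackets \eqref{eqn:LB00}. Setting $W \defeq T - Z$, one has $W = \partial_v$, the brackets reduce to $[T,X_i] = [Z,X_i] = -\tfrac{a_i}{2} W$ with $[T,Z] = [X_i,X_j] = 0$, and a one-line check shows $W = \partial_v$ is central. Hence $\{T, X_3, \dots, X_{2n}, Z\}$ spans a $2n$-dimensional Lie algebra $\mathfrak g$ with constant structure constants whose only nonzero brackets are the $[T,X_i]$; since $[\mathfrak g,\mathfrak g] = \mathrm{span}\{W\}$ is central, $\mathfrak g$ is $2$-step nilpotent, and in fact $\mathfrak g \cong \mathfrak h_3 \oplus \RR^{2n-3}$ (Heisenberg plus abelian) whenever $(a_3,\dots,a_{2n}) \neq 0$.

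The remaining step is to convert this algebraic fact into left-invariance of $g$, via the standard correspondence between absolute parallelisms with constant structure constants and invariant metrics on Lie groups. The frame \eqref{eqn:Tn} is globally defined, $g$-orthonormal, and \emph{complete}: along the flow of $T$ (resp.\ $Z$) the coordinates $x^i$ are fixed, $u$ moves at unit speed, and $v$ satisfies $\dot v = \tfrac{1}{2}(H \pm 1)$, whose right-hand side is a smooth function of the flow parameter alone and hence integrable over all of $\RR$ (here the global smoothness of $\varphi$ is used). Since $\RR^{2n}$ is simply connected, the infinitesimal action $\mathfrak g \to \mathfrak X(\RR^{2n})$ integrates to a transitive action whose isotropy is discrete and therefore, by simple connectivity, trivial, identifying $\RR^{2n}$ with the simply connected Lie group $G$ having Lie algebra $\mathfrak g$, and \eqref{eqn:Tn} with an invariant frame; a metric rendering an invariant frame orthonormal is itself invariant, so $g$ is left-invariant. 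I expect the only real care to lie in this last step: verifying completeness of the frame fields and keeping the left/right-translation conventions straight, replacing $G$ by its opposite group if necessary so that $g$ appears as a genuinely \emph{left}-invariant metric.
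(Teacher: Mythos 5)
Your proposal is correct and takes essentially the same approach as the paper: the paper's own proof is a two-line remark observing that, for affine $H$, the scalar curvature formula \eqref{eqn:nscalar} certifies strict almost-K\"ahler-ness while the Lie brackets \eqref{eqn:LB00} acquire constant structure constants. Your write-up simply fills in the Lie-theoretic details the paper leaves implicit\,---\,completeness of the frame fields, integration of the algebra to a transitive action identifying $\RR^{2n}$ with the simply connected group, and the left/right-invariance convention\,---\,and these steps are all correct.
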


\begin{proof}
This is a direct consequence of \eqref{eqn:nscalar} in Corollary \ref{cor:flat}, and the fact that in this case the Lie brackets of the $g$-orthonormal basis $\{T,X_3,\dots,X_{2n},Z\}$,
\beqa
\label{eqn:LB00}
[T,X_i] = [Z,X_i] = -\frac{H_i}{2}(T-Z) \comma [T,Z] = [X_i,X_j] = 0,
\eeqa
will have constant structure constants.
\end{proof}

Recall the \emph{codifferential} $\delta$ of the exterior derivative $d$; when applied to the fundamental 2-form $\omega$, it yields a 1-form $\delta\omega$ given by
$$
\delta\omega \defeq (*^{-1} \cdot d \cdot *)\omega,
$$
where $*$ is the Hodge star operator with respect to $g$. The codifferential $\delta$ is the $g$-adjoint of $d$, in the sense that $\int g(\delta \omega_1,\omega_2)\text{dvol} = \int g(\omega_1,d\omega_2)\text{dvol}$.  Its action can equivalently be expressed as
\beqa
\label{eqn:coclosed}
\delta\omega(\cdot) = -\sum_{i}(\cd{E_i}{\omega})(E_i,\cdot),
\eeqa
where $\nabla$ is the Levi-Civita connection of $g$ and where $\{E_1,\dots,E_{2n}\}$ is any $g$-orthonormal basis (see, e.g., \cite[p.~335]{Petersen}). We say that $\omega$ is \emph{co-closed} if $\delta \omega = 0$, a condition that holds automatically for any almost K\"ahler metric $g$.  What is interesting about the fundamental 2-forms of the almost K\"ahler metrics of Theorem \ref{thm:almost} is that they are co-closed, not just with respect to $g$, but also with respect to the \emph{Lorentzian} pp-wave metric $h$:

\begin{cor}
\label{cor:co-closed}
The fundamental 2-forms of the almost K\"ahler metrics of Theorem \ref{thm:almost} are also co-closed with respect to the Lorentzian pp-wave metrics \eqref{eqn:metric} to which they are dual.
\end{cor}

\begin{proof}
The basis $\{T,X_3,\dots,X_{2n},Z\}$ given by \eqref{eqn:Tn} is also $h$-orthonormal, with $h(T,T) = -1$ the timelike direction.  
It is straightforward to show that the Hodge-duals with respect to $g$ and $h$ differ by a constant multiple of $dx^3\wedge\dots\wedge dx^{2n}$, so that being
co-closed with respect to $g$ implies being co-closed with respect to $h$.  (One can also verify this via \eqref{eqn:coclosed}, with the Levi-Civita connection $\nabla^{\scalebox{0.5}{\emph{h}}}$ of $h$ being used in place of $\nabla$.)
\end{proof}

\section{Compact strictly almost K\"ahler metrics}
\label{sec:compact}
We now move to the compact setting, and show that our construction yields a large class of strictly almost K\"ahler metrics here as well; furthermore, since completeness comes ``for free" here, \emph{we can considerably expand our collection of almost K\"ahler metrics by generalizing our notion of a pp-wave}.  The generalization we need is well known in the literature, and consists of allowing the Euclidean ``plane front" $\sum_{i=3}^n (dx^i)^2$ in \eqref{eqn:metric} to be an arbitrary Riemannian metric; see \cite{candela2003} for a detailed study of such metrics. Our definition here is specifically tailored to the compact setting:

\begin{defn}[General plane-fronted waves; \cite{candela2003}]
Let $\varphi,\theta$ denote the standard angular coordinates on $\mathbb{S}^1 \times \mathbb{S}^1$, and let $(M,g_{\scalebox{0.5}{R}})$ be any closed Riemannian manifold.  Let $H$ be an arbitrary smooth function on $\mathbb{S}^1 \times M$\emph{;} i.e., one that is independent of the first angular coordinate $\varphi$.  Then the Lorentzian metric $h$ defined on $\mathbb{S}^1 \times \mathbb{S}^1 \times M$ by
\beqa
\label{eqn:genppwave}
h \defeq 2d\varphi d\theta + Hd\theta^2 + g_{\scalebox{0.5}{R}}
\eeqa
is a \emph{(compact) general plane-fronted wave}.
\end{defn}

Note that we are not requiring the curvature condition \eqref{eqn:ppR} to hold here, though certainly this can be arranged; e.g., by taking $M = \mathbb{S}^1 \times \cdots \times \mathbb{S}^1$ with its standard flat metric (see, e.g., \cite[Example 1]{leistner}).  Although the metrics \eqref{eqn:genppwave} are more general than pp-waves, observe that they still come equipped with a parallel lightlike vector field, namely, $\partial_\varphi = \text{grad}\,\theta$.  In any case, almost K\"ahler metrics exist naturally on such spaces:

%
%

\begin{thm}
\label{thm:torus}
Let $(M,g_{\scalebox{0.5}{R}})$ be a closed almost K\"ahler manifold and consider the Lorentzian metric $h$ \eqref{eqn:genppwave} on $\mathbb{S}^1 \times \mathbb{S}^1 \times M$, with $H$ any smooth function on $\mathbb{S}^1 \times M$\emph{;} i.e., one that is independent of the first angular coordinate $\varphi$.  With respect to the vector field
$$
T \defeq \frac{1}{2}(H+1)\partial_\varphi-\partial_\theta,
$$
let $g$ denote the Riemannian metric dual to $h$\emph{:}
\beqa
\label{eqn:CSC}
g \defeq \gL + 2T^{\flat} \otimes T^{\flat}.
\eeqa
Then $g$ is an almost K\"ahler metric on $\mathbb{S}^1 \times \mathbb{S}^1 \times M$, which is not a warped product, and which is K\"ahler if and only if $H$ is constant on $M$ and $(M,g_{\scalebox{0.5}{R}})$ is K\"ahler. Furthermore, the fundamental 2-form of $g$ is also co-closed with respect to the Lorentzian metric $h$.
\end{thm}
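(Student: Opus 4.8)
The plan is to mirror the proof of Theorem~\ref{thm:almost}, with the flat plane front $\sum (dx^i)^2$ replaced by the almost K\"ahler factor $(M,\gR,J_M)$ and its fundamental form $\omega_M$. First I would introduce the companion field $Z \defeq \frac{1}{2}(H-1)\partial_\varphi - \partial_\theta$, the analogue of \eqref{eqn:Tn}, and, near each point of $M$, a local $\gR$-orthonormal frame $e_3,\dots,e_{2n}$ (pulled back to the product, so that it commutes with $\partial_\varphi$ and $\partial_\theta$). A one-line computation with \eqref{eqn:genppwave} gives $\gL(T,T) = -1$, $\gL(Z,Z)=1$, $\gL(T,Z)=0$, and $\gL(T,e_i) = \gL(Z,e_i) = 0$; since $T^{\flat}(T) = -1$ while $T^{\flat}$ annihilates $Z$ and each $e_i$, the dual metric \eqref{eqn:CSC} makes $\{T,e_3,\dots,e_{2n},Z\}$ a global $g$-orthonormal frame. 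I then define $J$ by $JT\defeq Z$, $JZ = -T$, and $J|_{TM}\defeq J_M$; then $J^2 = -1$ and compatibility $g(J\cdot,J\cdot)=g$ is immediate from orthonormality.

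The heart of the proof is closedness of $\omega\defeq g(\cdot,J\cdot)$. Since $\partial_\varphi = T-Z$ and $H$ is independent of $\varphi$, for \emph{every} vector field $W$ tangent to $M$ one has $[T,W] = [Z,W] = -\frac{1}{2}(WH)(T-Z)$ and $[T,Z]=0$, exactly reproducing \eqref{eqn:LB00}; the only genuinely new brackets are the $[e_i,e_j]$, which stay tangent to $M$. On the frame one finds $\omega(T,Z) = -1$, $\omega(T,e_i) = \omega(Z,e_i) = 0$, and $\omega(e_i,e_j) = \omega_M(e_i,e_j)$. Substituting into the intrinsic Cartan formula for $d\omega(a,b,c)$, every triple that involves $T$ or $Z$ collapses through the same cancellations as in \eqref{eqn:change0}--\eqref{eqn:change} (using crucially that $T$ and $Z$ have no $M$-component, so they kill functions on $M$), while the purely transverse triple reduces to $d\omega(e_i,e_j,e_k) = d\omega_M(e_i,e_j,e_k) = 0$ by the almost K\"ahler hypothesis on $M$. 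Hence $d\omega = 0$ and $(g,J)$ is almost K\"ahler.

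For integrability I would evaluate the Nijenhuis tensor on the frame. The plane $\{T,Z\}$ is $J$-invariant with $[T,Z]=0$, so $N_{\scriptscriptstyle J}(T,Z)=0$; the purely transverse components satisfy $N_{\scriptscriptstyle J}(e_i,e_j) = N_{J_M}(e_i,e_j)$, since every bracket and every application of $J$ involved stays tangent to $M$. The remaining, ``fiber-type'' components reproduce \eqref{eqn:Kahler00}: writing $H_i \defeq e_i(H)$ and $(JH)_i \defeq (J_M e_i)(H)$,
\[
N_{\scriptscriptstyle J}(T,e_i) = \tfrac{1}{2}\big(H_i-(JH)_i\big)(T-Z) + \tfrac{1}{2}\big(H_i+(JH)_i\big)(T+Z),
\]
which vanishes if and only if $e_i(H) = (J_M e_i)(H) = 0$ for all $i$; since $\{e_i,J_M e_i\}$ spans $TM$, this says precisely that $dH$ annihilates $TM$, i.e.\ that $H$ is constant on $M$. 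Thus $J$ is integrable precisely when $H$ is constant on $M$ and $(M,\gR,J_M)$ is itself K\"ahler; in particular $g$ is \emph{strictly} almost K\"ahler whenever $H$ is non-constant on $M$, which is the asserted dichotomy for K\"ahler $M$.

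Finally, to see that $g$ is not a warped product I would compute its Levi-Civita connection in $\{T,e_i,Z\}$: one checks that $\cd{e_i}{e_j}$ is just the intrinsic $\gR$-connection on $M$, while the cross-derivatives reproduce those of Proposition~\ref{prop:Rm}, now with $H_i = e_i(H)$; in particular $\cd{T}{T} = \sum_i \tfrac{1}{2}H_i\,e_i$ and $\cd{e_i}{T} = -\tfrac{1}{2}H_i\,Z$ tie the $\{T,Z\}$-plane to the transverse distribution, which is incompatible with the totally geodesic / totally umbilic splitting that any warped-product decomposition would force (equivalently, the mixed components of \eqref{eqn:Ric0} obstruct block-diagonalizing the Ricci tensor along a base--fiber splitting). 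I expect the two main obstacles to be, first, the bookkeeping of the non-commuting transverse brackets $[e_i,e_j]$ so as to isolate cleanly the intrinsic pieces $d\omega_M$ and $N_{J_M}$ from the $H$-governed fiber terms; and second, turning the connection computation into a genuine proof that \emph{no} warped-product structure exists, rather than merely that the natural splitting fails.
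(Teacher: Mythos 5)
Your proposal follows essentially the same route as the paper's proof: the same companion field $Z$, the same extension of the almost complex structure of $M$ by $JT \defeq Z$, the same Lie-bracket computation (your $[T,W]=[Z,W]=-\tfrac{1}{2}(WH)(T-Z)$ is exactly the paper's \eqref{eqn:LB11}), the same reduction of $d\omega=0$ to the cancellations of \eqref{eqn:change0}--\eqref{eqn:change} plus closedness of $\omega_M$ on purely transverse triples, and the same Nijenhuis computation \eqref{eqn:Kahler00} for strictness. You are in fact somewhat more careful than the paper in two places: first, you note that integrability of $J$ requires $N_{J_M}=0$ in addition to $dH|_{TM}=0$, so the stated ``K\"ahler if and only if $H$ is constant on $M$'' is accurate only when $(M,\gR,J_M)$ is itself K\"ahler\,---\,a caveat the paper's proof (which only argues that $X_i(H)\neq 0$ obstructs integrability) glosses over; second, you attempt the ``not a warped product'' assertion, which the paper's proof does not address at all, so although your sketch there is admittedly incomplete (ruling out \emph{every} warped-product decomposition, not just the obvious splitting, would indeed require more work), you are not behind the paper on that point.
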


\begin{proof}
Let $J_{\scalebox{0.5}{\emph{R}}}$ be the almost complex structure on $M$ compatible with $g_{\scalebox{0.5}{\emph{R}}}$; let $\{X_1,\dots,X_n,X_{n+1},\dots,X_{2n}\}$ be any locally defined $g_{\scalebox{0.5}{\emph{R}}}$-orthonormal frame on $M$ satisfying
$$
J_{\scalebox{0.5}{\emph{R}}}X_i = X_{n+i} \comma i=1,\dots,n.
$$
Then $J_{\scalebox{0.5}{\emph{R}}}$ extends naturally to an $g$-compatible almost complex structure $J$ on $\mathbb{S}^1 \times \mathbb{S}^1 \times M$ provided we define, as we did on $\RR^{2n}$,
$$
JT \defeq Z \comma Z \defeq \frac{1}{2}(H-1)\partial_\varphi-\partial_\theta.
$$
The analogue of \eqref{eqn:Tn} on $\mathbb{S}^1 \times \mathbb{S}^1 \times M$ is therefore the $g$-orthonormal frame $\{T,X_1,\dots,X_n,X_{n+1},\dots,X_{2n},Z\}$, whose Lie brackets are, analogously to \eqref{eqn:LB00},
\beqa
\label{eqn:LB11}
{[T,X_i]} = {[Z,X_i]} = -\frac{X_i(H)}{2}(T-Z) \comma {[T,Z]} = 0.
\eeqa
To show that the fundamental 2-form $\omega \defeq g(\cdot,J)$ is closed, we may proceed as we did in the non-compact case in Theorem \ref{thm:almost}, by observing that $\omega$ arises from the symplectic structures on $\mathbb{S}^1 \times \mathbb{S}^1$ and $M$, as
$$
\omega = d\varphi \wedge d\vartheta + \omega_{\scalebox{0.5}{\emph{M}}},
$$
where $\omega_{\scalebox{0.5}{\emph{M}}} \defeq g_{\scalebox{0.5}{\emph{R}}}(\cdot,J_{\scalebox{0.5}{\emph{R}}})$; this makes it clear that $(g,J)$ will be almost K\"ahler if and only if $(g_{\scalebox{0.5}{\emph{R}}},J_{\scalebox{0.5}{\emph{R}}})$ itself is so. (Alternatively, one may may verify that $d\omega = 0$ component-by-component, using the Lie brackets \eqref{eqn:LB11}.)  Finally, $g$ will be K\"ahler if and only if $N_{\scriptscriptstyle J}(T,X_i) =  0$ and $N_{\scriptscriptstyle J}(X_i,X_j) =  0$; the former occurs when $X_i(H) = 0$ as in  \eqref{eqn:Kahler00}, so that $H$ must be constant on $M$; the latter occurs when $(M,g_{\scalebox{0.5}{\emph{R}}})$ itself is K\"ahler.  Finally, that $\omega$ is also co-closed with respect to the Lorentzian metric $h$ follows as it did in Corollary \ref{cor:co-closed}, namely, the Hodge-duals with respect to $g$ and $h$ differ by a constant multiple of $\omega_{\scalebox{0.5}{\emph{M}}}$, so that being co-closed with respect to $g$ implies being co-closed with respect to $h$.
\end{proof}

We close with a three final remarks regarding our construction.
\begin{enumerate}[leftmargin=*]
\item[1.] Taking $M$ to be the $2n$-torus, with $g_{\scalebox{0.5}{\emph{R}}}$ its standard flat metric, furnishes a plane-fronted wave \eqref{eqn:genppwave} (in fact, a pp-wave) and thus an almost K\"ahler metric \eqref{eqn:CSC} on the $(2n+2)$-torus.  As this holds for every choice of smooth function $H$ on $\mathbb{S}^1 \times M$, we thus have uncountably many compact examples of almost K\"ahler metrics dual to pp-waves.  

\item[2.] If the product $\mathbb{S}^1\times \mathbb{S}^1 \times M$ has any non-even odd Betti numbers (e.g., for $M = \mathbb{S}^1 \times \mathbb{S}^3$), then not only will $g$ not be K\"ahler, but the manifold $\mathbb{S}^1\times \mathbb{S}^1 \times M$ cannot admit any K\"ahler metric (see, e.g. \cite[p.~84]{besse}).

\item[3.] Speaking of K\"ahler metrics, we can also realize our almost K\"ahler metrics as arising from deformations of them.  This is done by generalizing our definition of pp-wave by allowing $H$ to be  a function of $v$ (for $\RR^{2n}$) or $\varphi$ (on $\mathbb{S}^1 \times \mathbb{S}^1 \times M$) as well.  Then the following is true:
\begin{enumerate}[leftmargin=*]
\item[i.] The vector fields $\partial_v$ or $\partial_\varphi$ would no longer be Killing vector fields. One consequence of this is that the metric on $\RR^{2n}$ may no longer be complete (Proposition \ref{prop:compn} no longer applies). Let us therefore consider only the compact case, $\mathbb{S}^1 \times \mathbb{S}^1 \times M$.
\item[ii.] It turns out that, even when $H$ is taken to be a function of all the coordinates $(\varphi,\theta,x^1,\dots,x^{2n})$, the corresponding Riemannian dual is still  an almost K\"ahler metric.  Indeed, the only change that arises is to the Lie bracket $[T,Z]$ in \eqref{eqn:LB00}, which now becomes
$$
[T,Z] = -\frac{H_\varphi}{2}(T-Z).
$$
But a ``component-by-component" inspection of $d\omega$ reveals that $d\omega=0$ will still hold, as before.  
\item[iii.] Thus our family of strictly almost K\"ahler metrics is enlarged considerably by allowing $H$ to be an arbitrary smooth function defined on $\mathbb{S}^1 \times \mathbb{S}^1 \times M$. But this opens up a new possibility: there are now choices of $H$ for which the Riemannian dual will in fact be a non-flat \emph{K\"ahler} metric. Indeed, the condition for the integrability of $J$, \eqref{eqn:Kahler00}, still holds as before: we must have each $X_i(H) = 0$, for $i=1,\dots,2n$. If we suppose this happens, then we are left with a function on $\mathbb{S}^1 \times \mathbb{S}^1 \times M$ of the form $H=H(\varphi,\theta)$ (as opposed to just $H=H(\theta)$, as before).  But when $H=H(\varphi,\theta)$, the resulting metric on $\mathbb{S}^1 \times \mathbb{S}^1 \times M$ will split as a product of a generally non-flat K\"ahler surface $\mathbb{S}^1 \times \mathbb{S}^1$ and an almost K\"ahler metric on $M$ (compare \eqref{eqn:gn}).  We may therefore view our construction as also arising from a deformation of K\"ahler products.  
\end{enumerate}
\end{enumerate}


\section{Plane wave limits and almost K\"ahler geometry}
\label{sec:Penrose}
In this section we demonstrate how every even-dimensional Lorentzian metric admits, locally, a (Riemannian) strictly almost K\"ahler metric via an appropriate limit.  This limit is the ``plane wave limit" due to Penrose \cite{penrose}, itself an instance of a more general notion of the ``limit of a spacetime" pioneered by Geroch \cite{gerochL}.  In fact the existence of Penrose's limit is intimately connected with the fact, mentioned in Section \ref{subsec:pp:waves} above, that all the curvature invariants of pp-waves vanish.  We begin with a brief, self-contained presentation of Penrose's construction, followed afterwards by Theorem \ref{thm:3}, which connects this limit to the  existence of almost K\"ahler metrics.  Penrose's construction is as follows:  
\begin{enumerate}[leftmargin=*]
\item[1.] Given a Lorentzian metric $h$ and a lightlike gradient vector field $N$,
$$
N \neq 0 \comma h(N,N) = 0 \comma N = \text{grad}_{\scriptscriptstyle h}f,
$$
a so called ``lightlike coordinate system" $(x^0,x^1,x^2,\dots,x^n)$ can be set up with respect to which $N = \frac{\partial}{\partial x^0}$ and such that $h$ has the form
\beqa
\label{eqn:lor*}
(h_{ij}) \defeq
    \begin{pmatrix}
        0 & 1  & 0 & 0 & \cdots & 0\\
        1 & h_{11} & h_{12} & h_{13} & \cdots & h_{1n}\\
        0 & h_{21} &  h_{22} & h_{23} & \cdots &  h_{2n}\\
        0 & h_{31} &  h_{32} & h_{33} & \cdots & h_{3n}\\
        \vdots & \vdots & \vdots & \vdots & \ddots &  \vdots\\
        0 & h_{n1} & h_{n2} & h_{n3} & \cdots & h_{nn}
      \end{pmatrix}\cdot\nonumber
\eeqa
See, e.g., \cite[Proposition 7.14,~p.~61]{pen} for a derivation. (Note that such an $N$ always exists locally, since the eikonal equation $h^{ij}f_if_j = 0$ always admits nontrivial solutions locally.)

\item[2.] Now scale these coordinates by defining another coordinate system $(\tilde{x}^0,\tilde{x}^1,\tilde{x}^2,\dots,\tilde{x}^n)$ via the diffeomorphism $\varphi$ given by
\beqa
\label{eqn:tilde}
(x^0,x^1,x^2,\dots,x^n) \overset{\varphi}{\mapsto} \underbrace{\,(x^0,\Omega^{-2}x^1,\Omega^{-1} x^2,\dots,\Omega^{-1} x^n)\,}_{``(\tilde{x}^0,\tilde{x}^1,\tilde{x}^2,\dots,\tilde{x}^n)"},
\eeqa
where $\Omega > 0$ is a constant.

\item[3.] Next, define a \emph{new} metric $\gpw$ in the new coordinates $(\tilde{x}^0,\tilde{x}^1,\tilde{x}^2,\dots,\tilde{x}^n)$ as follows,
\beqa
\label{newmetric}
\big((\gpw)_{ij}\big) \defeq 
    \underbrace{\,\begin{pmatrix}
        0 & 1  & 0 & 0 & \cdots & 0\\
        1 & \Omega^2h_{11} & \Omega h_{12} & \Omega h_{13} & \cdots & \Omega h_{1n}\\
        0 & \Omega h_{21} &  h_{22} & h_{23} & \cdots & h_{2n}\\
        0 & \Omega h_{31} &  h_{32} & h_{33} & \cdots & h_{3n}\\
        \vdots & \vdots & \vdots & \vdots & \ddots &  \vdots\\
        0 & \Omega h_{n1} & h_{n2} & h_{n3} & \cdots & h_{nn}
      \end{pmatrix}\,}_{\text{defined in the coordinates}~(\tilde{x}^0,\tilde{x}^1,\tilde{x}^2,\dots,\tilde{x}^n)},\nonumber
\eeqa
where each component $(\gpw)_{ij}$ is defined as follows,
\beqa
(\gpw)_{11}(\tilde{x}^0,\tilde{x}^1,\tilde{x}^2,\dots,\tilde{x}^n) \!\!&\defeq&\!\! \Omega^2 \underbrace{\,{h}_{11}(\tilde{x}^0,\Omega^2\tilde{x}^1,\Omega\,\tilde{x}^2,\dots,\Omega\,\tilde{x}^n)\,}_{=\,{h}_{11}(x^0,x^1,x^2,\dots,x^n)}\label{newcomp},\\
(\gpw)_{22}(\tilde{x}^0,\tilde{x}^1,\tilde{x}^2,\dots,\tilde{x}^n) \!\!&\defeq&\!\! \underbrace{\,h_{22}(\tilde{x}^0,\Omega^2\tilde{x}^1,\Omega\,\tilde{x}^2,\dots,\Omega\,\tilde{x}^n)\,}_{=\,{h}_{22}(x^0,x^1,x^2,\dots,x^n)}\label{newcomp2},
\eeqa
and similarly with the others.  Note that as $\Omega \to 0$,
\beqa
\lim_{\Omega \to 0}  (\gpw)_{11} \!\!&\overset{\eqref{newcomp}}{=}&\!\! 0\cdot {h}_{11}(\tilde{x}^0,0,0,\dots,0) = 0,\nonumber\\
\lim_{\Omega \to 0}  (\gpw)_{22} \!\!&\overset{\eqref{newcomp2}}{=}&\!\! {h}_{22}(\tilde{x}^0,0,0,\dots,0),\label{newcomp4}\nonumber
\eeqa
etc.  The crucial fact is that the metric $\gpw$ is \emph{conformal} to the pullback metric $(\varphi^{-1})^*h$; to see this, use the fact that 
$$
((\varphi^{-1})^*h)(\partial_{\tilde{x}^i},\partial_{\tilde{x}^j})d\tilde{x}^i \otimes d\tilde{x}^j = h(\partial_{x^i},\partial_{x^j})dx^i\otimes dx^j,
$$
as well as \eqref{eqn:tilde}, to obtain
\beqa
dx^0 \otimes dx^1 \!\!&=&\!\! \Omega^{2}\,d\tilde{x}^0 \otimes d\tilde{x}^1, \nonumber\\
h_{11}(x^0,x^1,x^2,\dots,x^n)\,dx^1 \otimes dx^1 \!\!&\overset{\eqref{newcomp}}{=}&\!\! \Omega^{2}\,(\gpw)_{11}(\tilde{x}^0,\tilde{x}^1,\tilde{x}^2,\dots,\tilde{x}^n)\,d\tilde{x}^1 \otimes d\tilde{x}^1,\nonumber\\
h_{12}(x^0,x^1,x^2,\dots,x^n)\,dx^1 \otimes dx^2 \!\!&=&\!\! \Omega^{2}\,(\gpw)_{12}(\tilde{x}^0,\tilde{x}^1,\tilde{x}^2,\dots,\tilde{x}^n)\,d\tilde{x}^1 \otimes d\tilde{x}^2,\nonumber\\
h_{22}(x^0,x^1,x^2,\dots,x^n)\,dx^2 \otimes dx^2 \!\!&\overset{\eqref{newcomp2}}{=}&\!\!\Omega^{2}\,(\gpw)_{22}(\tilde{x}^0,\tilde{x}^1,\tilde{x}^2,\dots,\tilde{x}^n)\,d\tilde{x}^2 \otimes d\tilde{x}^2,\nonumber
\eeqa
and so on, which clearly yields the relationship
\beqa
\label{eqn:cm}
(\varphi^{-1})^*h = \Omega^{2}\,\gpw.
\eeqa
In particular, setting $\tilde{h} \defeq (\varphi^{-1})^*h$, the homothety \eqref{eqn:cm} means that the Levi-Civita connections of $\gpw$ and $\tilde{h}$ are equal: $\nabla^{\scriptscriptstyle \Omega} = \nabla^{\scriptscriptstyle \tilde{h}}$.

\item[4.] Finally, take the limit
$$
h_{\scalebox{0.4}{PW}} \defeq \lim_{\Omega \to 0} \gpw = \lim_{\Omega \to 0} \frac{(\varphi^{-1})^*h}{\Omega^{2}}\cdot
$$
This limit metric $h_{\scalebox{0.4}{PW}}$ is precisely
\beqa
\label{plw2}
    \underbrace{\,\begin{pmatrix}
        0 & 1  & 0 & 0 & \cdots & 0\\
        1 & 0 & 0 & 0 & \cdots & 0\\
        0 & 0 &  h_{22}(\tilde{x}^0,0,\dots,0) & h_{23}(\tilde{x}^0,0,\dots,0) & \cdots & h_{2n}(\tilde{x}^0,0,\dots,0)\\
        0 & 0 &  h_{32}(\tilde{x}^0,0,\dots,0) & h_{33}(\tilde{x}^0,0,\dots,0) & \cdots & h_{3n}(\tilde{x}^0,0,\dots,0)\\
        \vdots & \vdots & \vdots & \vdots & \ddots &  \vdots\\
        0 & 0 & h_{n2}(\tilde{x}^0,0,\dots,0) & h_{n3}(\tilde{x}^0,0,\dots,0) & \cdots & h_{nn}(\tilde{x}^0,0,\dots,0)
      \end{pmatrix}\,}_{\text{defined in the coordinates}~(\tilde{x}^0,\tilde{x}^1,\tilde{x}^2,\dots,\tilde{x}^n)},\nonumber
\eeqa
which is in fact a \emph{plane} wave metric in so called ``Rosen coordinates"; i.e., an isometry exists between this metric and \eqref{eqn:metric}, with $H(u,x^3,\dots,x^n)$ quadratic in the $x^i$'s (per the definition of plane wave). For the particulars of this isometry, consult, e.g., \cite{blau2}.  
\end{enumerate}

In Penrose's own words, a neighborhood of the integral curve $\gamma$ of $N = \text{grad}_{\scriptscriptstyle h}f$ through the origin has been expanded ``out to infinity," with the metric homothetically scaled up at the same time, all while keeping $\gamma$ itself unaffected\,---\,effectively ``zooming in" infinitesimally close to $\gamma$.  Although this construction is local and clearly depends on the choice of $N$, certain properties of $h$ are preserved \emph{regardless of how the limit is taken}; e.g., if $h$ is Einstein, then (every) $h_{\scalebox{0.4}{PW}}$ is Ricci flat, and if $h$ is locally conformally flat, then (every) $h_{\scalebox{0.4}{PW}}$ is so as well (see, e.g., \cite{philip}).  Theorem \ref{thm:almost} now provides an easy step to almost K\"ahler geometry:
 \begin{thm}
 \label{thm:3}
 Let $(M,h)$ be a Lorentzian manifold of dimension $2n \geq 4$.  Locally about any point in $M$, take the plane wave limit \emph{$h_{\scalebox{0.4}{PW}}$} of $h$ and express it  in the coordinates \eqref{eqn:metric}.  Then the standard Riemannian dual \eqref{def:dual}-\eqref{def:T} of \emph{$h_{\scalebox{0.4}{PW}}$} admits an almost K\"ahler structure as in Theorem \emph{\ref{thm:almost}}.
 \end{thm}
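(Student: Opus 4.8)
The plan is to recognize Theorem \ref{thm:3} as essentially a corollary of Theorem \ref{thm:almost} together with the Penrose construction just described: once the plane wave limit is in hand, nearly all the work is already done, and the only genuine inputs needed are that the limit is a \emph{plane} (hence pp-) wave and that the ambient dimension is even, so that the local normal form \eqref{eqn:metric} and the almost complex structure \eqref{eqn:Jn} are both available.

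First I would record what Step 4 of the construction already gives: the limit metric $h_{\scalebox{0.5}{\emph{PW}}}$ is a plane wave, isometric to a metric of the form \eqref{eqn:metric} in which $H(u,x^3,\dots,x^{2n})$ is a quadratic polynomial in the transverse variables (the explicit isometry to Robinson/standard coordinates being the one cited via \cite{blau2}). In particular $h_{\scalebox{0.5}{\emph{PW}}}$ is a pp-wave presented, on a local chart, exactly in the coordinates \eqref{eqn:metric}. Next I would check the dimension count: since $\dim M = 2n$, these coordinates are $(v,u,x^3,\dots,x^{2n})$ with exactly $2n-2$ transverse variables $x^3,\dots,x^{2n}$, an \emph{even} number. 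This is precisely what lets the almost complex structure \eqref{eqn:Jn} be defined, since the $g$-orthonormal frame $\{T,X_3,\dots,X_{2n},Z\}$ then splits into $n$ pairs $(T,Z),(X_3,X_4),\dots,(X_{2n-1},X_{2n})$ on which $J$ acts by rotation; had the dimension been odd, no such pairing would exist, which is the sole reason the hypothesis $2n\geq 4$ appears.

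Finally I would invoke Theorem \ref{thm:almost} directly. The standard Riemannian dual $g$ of $h_{\scalebox{0.5}{\emph{PW}}}$ is built via \eqref{def:dual}--\eqref{def:T} from the unit timelike field $T \defeq \frac{1}{2}(H+1)\partial_v - \partial_u$, and Theorem \ref{thm:almost} then asserts that $(g,J)$ is almost K\"ahler. The point worth emphasizing is that the almost K\"ahler conclusion of that theorem---compatibility of $J$ with $g$ and closedness of $\omega = g(\cdot,J\cdot)$---is purely local: its proof verifies $d\omega = 0$ through the Lie brackets \eqref{eqn:LB00} of the orthonormal frame, a pointwise computation that is valid on any chart and does not require the global completeness part of the statement (which is not even claimed here). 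It also imposes no condition on $H$ beyond smoothness, so the fact that our $H$ happens to be quadratic is immaterial.

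The main obstacle, such as it is, is conceptual rather than computational: one must confirm that the output of the plane wave limit is genuinely of the normal form \eqref{eqn:metric} on an even-dimensional chart, so that the apparatus of Theorem \ref{thm:almost} applies verbatim. Granting the isometry to Robinson coordinates supplied by the construction, the remaining content is a direct citation of Theorem \ref{thm:almost}, with no new curvature or integrability computation to carry out.
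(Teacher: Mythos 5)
Your proposal is correct and matches the paper's (implicit) argument exactly: the paper offers no separate proof of Theorem \ref{thm:3}, treating it as an immediate consequence of the Penrose construction producing a plane wave in the coordinates \eqref{eqn:metric} together with a direct application of Theorem \ref{thm:almost}. Your added remarks---that the even number $2n-2$ of transverse coordinates is what permits the pairing in \eqref{eqn:Jn}, and that the almost K\"ahler verification is purely local so completeness plays no role on a chart---are exactly the right points to make explicit.
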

 
 \section*{Acknowledgments}
The authors thank the anonymous referee for numerous helpful observations, and in particular for pointing our attention to the co-closed property that led to Corollary \ref{cor:co-closed}.
 
\bibliographystyle{alpha}
\bibliography{almost_Kahler_v2}
\end{document}